\DeclareMathOperator{\Tr}{Tr}
\DeclareMathOperator{\N}{N}
\title{MRD-codes arising from the trinomial $x^q+x^{q^3}+cx^{q^5}\in\F_{q^6}[x]$}
\author{Giuseppe Marino, Maria Montanucci and Ferdinando Zullo}
\newcommand{\cC}{{\mathcal C}}
\newcommand{\cL}{{\mathcal L}}
\newcommand{\F}{{\mathbb F}}
\newcommand{\la}{\langle}
\newcommand{\ra}{\rangle}
\newcommand{\D}{\mathcal{D}}
\newtheorem{theorem}{Theorem}[section]
\newtheorem{proposition}[theorem]{Proposition}
\newtheorem{result}[theorem]{Result}
\DeclareMathOperator{\PG}{\mathrm{PG}}
\DeclareMathOperator{\GL}{{GL}}
\DeclareMathOperator{\PGaL}{P\Gamma L}
\begin{document}

\maketitle

\begin{abstract}
In \cite{CsMZ2018}, the existence of $\F_q$-linear MRD-codes of $\F_q^{6\times 6}$, with dimension $12$, minimum distance 5 and left idealiser isomorphic to $\F_{q^6}$, defined by a trinomial of $\F_{q^6}[x]$, when $q$ is odd and $q\equiv 0,\pm 1\pmod 5$, has been proved.  In this paper we show that this family produces $\F_q$-linear MRD-codes of $\F_q^{6\times 6}$, with the same properties, also in the remaining $q$ odd cases, but not in the $q$ even case. These MRD-codes are  not equivalent to the previously known MRD-codes. We also prove that the corresponding maximum scattered $\F_q$-linear sets of $\PG(1,q^6)$ are not $\PGaL(2,q^6)$-equivalent to any previously known linear set.

\end{abstract}

\bigskip
{\it AMS subject classification:} 51E20, 05B25, 51E22

\bigskip
{\it Keywords:} Scattered subspace, MRD-code, Linear set

\section{Introduction and preliminary results}
Let ${\rm End}_{\F_q}(\F_{q^n}):={\rm Hom}_q(\F_{q^n},\F_{q^n})$ be the set of all $\F_q$-linear maps of $\F_{q^n}$ in itself. It is well-known that each element $f$ of  ${\rm End}_{\F_q}(\F_{q^n})$ can be represented in a unique way as a $q$-polynomial over $\F_{q^n}$ of degree less than or equal to $q^{n-1}$, that is $f(x)=\sum_{i=0}^{n-1} a_i x^{q^{i}}$, with coefficients in $\F_{q^n}$. Such polynomials are also called \emph{linearized}. The set of $q$-polynomials over $\F_{q^n}$, say $\cL_{n,q}$, considered modulo $(x^{q^n}-x)$, and endowed with the addition and composition of polynomials in $\F_{q^n}$ and scalar multiplication by elements in $\F_q$, forms an $\F_q$-subalgebra of the algebra of $\F_q$-linear transformations of $\F_{q^n}$. Hence, we
can define the \emph{kernel} of $f$ as the kernel of the corresponding $\F_q$-linear transformation of $\F_{q^n}$, which is the same as the set of roots of $f$ in $\F_{q^n}$; and the \emph{rank} of $f$ as the rank of the corresponding $\F_q$-linear transformation of $\F_{q^n}$.
For $f\in \cL_{n,q}$ with $\deg f = q^{k}$,  we call $k$ the \emph{$q$-degree} of $f$ and we denote it by $\deg_{q} f$.
It is clear that in this case the kernel of $f$ has dimension at most $k$ and the rank of $f$ is at least $n-k$.

In \cite{CsMPZ2018}, the $q$-polynomials $f$ such that $\dim_{\F_q} \ker\,f = \deg_{q} f$ are called $q$-polynomials \emph{with maximum kernel}. Also in \cite[Theorem 1.2]{CsMPZ2018}, sufficient and necessary conditions for the coefficients of a $q$-polynomial $f$ over $\F_{q^n}$ ensuring $f$ has maximum kernel are given (see also \cite{McGuireSheekey}).

The set $\F_q^{m\times n}$ of all $m\times n$ matrices over $\F_q$ is a rank metric $\F_q$-space with the \emph{rank metric} or the \emph{rank distance} defined by
\[d(A,B)=\mathrm{rank}(A-B),\]
for any $A,B\in \F_q^{m\times n}$.
A subset $\cC\subseteq \F_q^{m\times n}$ with respect to the rank metric is usually called a \emph{rank-metric code} or a \emph{rank-distance code} (or \emph{RD-code} for short). When $\cC$ contains at least two elements, the \emph{minimum distance} of $\cC$ is given by
\[d(\cC)=\min_{A,B\in \cC, A\neq B} \{d(A,B)\}.\]
When $\cC$ is an $\F_q$-linear subspace of $\F_q^{m\times n}$, we say that $\cC$ is an $\F_q$-linear code and its dimension $\dim_{\F_q}(\cC)$ is defined to be the dimension of $\cC$ as a subspace over $\F_q$. For any $\cC\subseteq \F_q^{m\times n}$ with $d(\cC)=d$, it is well-known that
\[\#\cC\le q^{\max\{m,n\}(\min\{m,n\}-d+1)},\]
which is a Singleton like bound for the rank metric (\cite{Delsarte}).  When equality holds, we call $\cC$ a \emph{maximum rank-distance} (MRD for short) code.

\medskip

In this paper we only consider $\F_q$-linear RD and MRD-codes with $m=n$.

\medskip

Two $\F_q$-linear rank-distance codes $\cC_1$ and $\cC_2$ in $\F_q^{n\times n}$ are \emph{equivalent} if there exist $A,B\in {\rm GL}(n,q)$ and $\rho\in{\rm Aut(\F_q)}$ such that $\cC_2=\{AM^\rho B\colon M\in\cC_1\}$.
%

In general, it is a difficult task to tell whether two given rank-distance codes are equivalent or not. The idealizers of an RD-code are useful invariants which may help us to distinguish them (see \cite{liebhold_automorphism_2016,LTZ2,GiuZ}).
Given an $\F_q$-linear rank-distance code $\cC\subseteq \F_q^{n\times n}$, following \cite{liebhold_automorphism_2016} its \emph{left} and \emph{right idealisers} are defined as
\[L(\cC) =\{M\in\F_q^{n\times n} \colon MC\in \cC \text{ for all }C\in \cC  \},\]
and
\[R(\cC) =\{M\in\F_q^{n\times n} \colon CM\in \cC \text{ for all }C\in \cC  \},\]
respectively.

The \emph{adjoint} of an $\F_q$-linear RD-code $\cC\subseteq \F_q^{n\times n}$ is the $\F_q$-linear code
\[ \cC^\top:=\{C^T \in \F_q^{n\times n} \colon C \in \cC\},\]
where $(\,.\,)^T$ denotes the transpose operation.  Note that the adjoint operation also preserves rank distance, implying that an $\F_q$-linear RD-code and its adjoint have the same minimum distance. Also $L(\cC)=R(\cC^T)$ and $R(\cC)=L(\cC)$ (\cite[Prop. 4.2]{LTZ2}).

The \emph{Delsarte dual code} of an $\F_q$-linear code $\cC\subseteq \F_q^{n\times n}$ is
\[\cC^\perp :=\{M\in \F_q^{n\times n} \colon \Tr(MN^T)=0 \text{ for all } N\in \cC  \},\] where $(\,.\,)^T$ denotes the transpose operation.
If $\cC$ is a linear MRD-code then $\cC^\perp$ is also a linear MRD-code as it was proved by Delsarte \cite{Delsarte}. Also from \cite{Delsarte}, if $\cC$ is an $\F_q$-linear code $\cC\subseteq \F_q^{n\times n}$ with dimension $k$ and minimum distance $d$, then $\cC^\perp$ has dimension $n(n-k)$ and minimum distance $k+1$.

It is well-known that two linear rank-distance codes are equivalent if and only if their adjoint codes (or their Delsarte duals) are equivalent.


\bigskip

Two MRD-codes in $\F_q^{n\times n}$ with minimum distance $n$ are equivalent if and only if the corresponding semifields are isotopic \cite[Theorem 7]{lavrauw_semifields_2011}.
In contrast, it appears to be difficult to obtain inequivalent MRD-codes in $\F_q^{n\times n}$ with minimum distance strictly less than $n$.
So far, the known inequivalent MRD-codes in $\F_q^{n\times n}$ of minimum distance strictly less than $n$, can be divided into two types.
\begin{enumerate}
	\item The first type of constructions consists of MRD-codes of minimum distance $d$ for arbitrary $2\le d\le n$.
	\begin{itemize}
		\item The first construction of MRD-codes was given by Delsarte~\cite{Delsarte} and rediscovered independently by Gabidulin~\cite{Gabidulin}.  This construction was generalized by Kshevetskiy and Gabidulin~\cite{kshevetskiy_new_2005} with the nowadays commonly called (generalized) Gabidulin codes. In 2016, Sheekey~\cite{Sh} found the so-called (generalized) twisted Gabidulin codes. They can be generalized into additive MRD-codes \cite{otal_additive_2016}. Very recently, by using skew polynomial rings Sheekey~\cite{sheekey_new_arxiv} proved that they can be further generalized into a quite large family and all the MRD-codes mentioned above can be obtained in this way.
		\item The non-additive family constructed by Otal and \"Ozbudak in \cite{otal_additive_2018}.
		\item The family appeared in \cite{trombetti_new_2017}.
	\end{itemize}
	\item The second type of constructions provides us MRD-codes of minimum distance $d=n-1$.
	\begin{itemize}
		\item Non-linear MRD-codes by Cossidente, the second author and Pavese \cite{cossidente_non-linear_2016} which were later generalized by Durante and Siciliano \cite{durante_nonlinear_MRD_2017}.
		\item Linear MRD-codes associated with maximum scattered linear sets of $\PG(1,q^6)$ and $\PG(1,q^8)$ presented in \cite{CMPZ} and \cite{CsMZ2018}.
	\end{itemize}
\end{enumerate}
Very recently, new MRD-codes of minimum distance $d=n-2$ and $n\in\{7,8\}$ have been constructed in \cite{CsMPYarxive}.


For the relationship between MRD-codes and other geometric objects such as linear sets and Segre varieties, we refer to \cite{Lu2017} and also to \cite{ShVdV}.

\bigskip

Since the metric space $\F_q^{n\times n}$ is isomorphic to the metric space ${\rm End}_{\F_q}(\F_{q^n})$ with rank distance defined as $d(f,g):={\rm rk}(f-g)$, taking into account the previous algebra isomorphism between ${\rm End}_{\F_q}(\F_{q^n})$ and $\cL_{n,q}$, it is clear that each $\F_q$-linear RD-code $\cC$ can be regarded as an $\F_q$-vector subspace of $\cL_{n,q}$. Hence, in terms of linearized polynomial, an RD-code of  $\F_q^{n\times n}$, with minimum distance $d$, is an $\F_q$-subspace of $\cL_{n,q}$ and $d:=\min \{d(f,g)\colon f,g\in \cC, f\neq g\}$. Also two given $\F_q$-linear MRD-codes $\cC_1$ and $\cC_2$ are equivalent if and only if there exist $\varphi_1$, $\varphi_2\in \cL_{n,q}$ permuting $\F_{q^n}$ and $\rho\in \mathrm{Aut}(\F_q)$ such that
\[ \varphi_1\circ f^\rho \circ \varphi_2 \in \cC_2 \text{ for all }f\in \cC_1,\]
where $\circ$ stands for the composition of maps and $f^\rho(x)= \sum a_i^\rho x^{q^i}$ for $f(x)=\sum a_i x^{q^i}$.
For a rank distance code $\cC$ given by a set of linearized polynomials, its left and right idealisers can be written as
\[L(\cC)= \{ \varphi \in \cL_{n,q}\colon \varphi \circ f \in \cC \text{ for all }f\in \cC \},\]
and
\[R(\cC)= \{ \varphi \in \cL_{n,q}\colon f \circ \varphi \in \cC \text{ for all }f\in \cC \}.\]
respectively.

Consider the non-degenerate symmetric bilinear form of $\F_{q^n}$ over $\F_q$ defined by $<x,y>= \Tr_{q^n/q}(xy)$,
for each $x,y \in \F_{q^n}$. Then the \emph{adjoint} $\hat{f}$ of the linearized polynomial $f(x)=\sum_{i=0}^{n-1} a_ix^{q^i} \in \mathcal{L}_{n,q}$ with respect to the bilinear form $<,>$ is $\hat{f}(x)=\sum_{i=0}^{n-1} a_i^{q^{n-i}}x^{q^{n-i}}$.
We will refer to $\hat{f}$ simply as the adjoint of $f$, omitting the bilinear form involved.
Hence, we may define the adjoint of a rank distance code $\cC$ given by a set of linearized polynomials as follows
$\cC^\top:=\{\hat{f} \colon f \in \cC\}$.

In \cite{CsMZ2018}, the authors proved that the set $\cC=\la x,x^q+x^{q^3}+cx^{q^5}\ra_{\F_{q^6}}$, $q$ odd, $c^2+c=1$, $q\equiv 0,\pm 1 \pmod 5$ is an $\F_q$-linear MRD-code of $\cL_{6,q}$ of dimension 12, minimum distance 5 and left idealiser isomorphic to $\F_{q^6}$. The right idealiser of $\cC$ is isomorphic to $\F_{q^2}$ (\cite[Appendix B]{PhDthesis}).
In this paper we further investigate the set $\cC$ with the same assumption $c^2+c=1$ and for each value of $q$ (odd and even), obtaining the following result.

\begin{theorem}\label{main}
The set of $q$-polynomials of $\cL_{6,q}$
\[\cC=\la x,x^q+x^{q^3}+cx^{q^5}\ra_{\F_{q^6}},\]
with $c^2+c=1$, is an $\F_q$-linear MRD-code of $\cL_{6,q}$ with dimension 12, minimum distance 5, left idealiser isomorphic to $\F_{q^6}$ and right idealiser isomorphic to $\F_{q^2}$, if and only if $q$ is odd. Moreover,  when $q$ is odd and $q\equiv \pm 2\pmod  5$, $\cC$ is not equivalent to the previously known MRD-codes.
\end{theorem}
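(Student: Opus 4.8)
I would split the statement into three independent pieces: the reduction of the MRD property to a scattering condition, the verification of that condition (which is the real content), and the idealiser and inequivalence claims.

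First the bookkeeping. Since $x$ and $h(x):=x^q+x^{q^3}+cx^{q^5}$ are $\F_{q^6}$-linearly independent in $\cL_{6,q}$ (the latter has $q$-degree $5$), the code $\cC=\la x,h\ra_{\F_{q^6}}$ has $\F_q$-dimension $12$, so $\#\cC=q^{12}=q^{6(6-5+1)}$, which meets the Singleton-like bound exactly when the minimum distance is $5$; hence $\cC$ is MRD with $d=5$ if and only if every nonzero element has rank at least $5$. Left-multiplying a nonzero element $\beta x+\alpha h$ by a scalar shows it suffices to treat $\beta x$ (full rank) and $h+\mu x$ for $\mu\in\F_{q^6}$, so $\cC$ is MRD with $d=5$ if and only if $\dim_{\F_q}\ker(h-\lambda\,\mathrm{id})\le 1$ for all $\lambda\in\F_{q^6}$, i.e. the subspace $U_h=\{(x,h(x))\colon x\in\F_{q^6}\}$ is scattered. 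Writing $h(x)=\lambda x$ together with its five Frobenius images as a homogeneous system $D(\lambda)\mathbf x=0$ in $\mathbf x=(x,x^q,\dots,x^{q^5})$, where $D(\lambda)$ is the $6\times6$ Dickson (autocirculant) matrix of $h-\lambda\,\mathrm{id}$, and using that $\dim_{\F_q}\ker(h-\lambda\,\mathrm{id})=6-\mathrm{rank}\,D(\lambda)$, the task becomes: $\mathrm{rank}\,D(\lambda)\ge 5$ for every $\lambda$.

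The heart of the proof is this rank estimate for odd $q$. When $q\equiv 0,\pm1\pmod 5$ one has $c\in\F_q$ and the statement is exactly the result of \cite{CsMZ2018}, so I would concentrate on the new range $q\equiv\pm2\pmod 5$, where $5$ is a non-square in $\F_q$ and hence $c\in\F_{q^2}\setminus\F_q$. Then $c$ and $c^q$ are the two roots of $t^2+t-1$, giving the crucial relations $c+c^q=-1$ and $c^{q+1}=-1$, and making the entries of $D(\lambda)$ alternate between $c$ and $\bar c=c^q$ along successive rows. The strategy is to assume $\mathrm{rank}\,D(\lambda)\le 4$ for some $\lambda$, i.e. $\dim_{\F_q}\ker(h-\lambda\,\mathrm{id})\ge 2$, and to reach a contradiction: two $\F_q$-independent kernel elements $x,y$ give the alternating relation $x\,h(y)-y\,h(x)=0$, which combined with the Frobenius-conjugate equations produces a system in $\lambda,\lambda^q,\dots,\lambda^{q^5}$. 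Eliminating the $x_i$ and folding the dependence through the norm maps $\N_{q^6/q^2}$ and $\N_{q^2/q}$ should reduce everything to a single polynomial condition on $\lambda$, which via $c+c^q=-1$ and $c^{q+1}=-1$ forces an impossibility in odd characteristic with $q\equiv\pm2\pmod5$ (e.g. $5=0$, which is excluded since then $\gcd(q,5)=1$). I expect the main obstacle to be exactly this elimination: keeping the alternating $c,\bar c$ bookkeeping under control so that the final obstruction is \emph{visibly} characteristic- and residue-dependent, rather than collapsing to an uninformative identity.

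For even $q$ the defining relation becomes $c^2+c+1=0$, so $c\in\F_4$ is a primitive cube root of unity and the collapse of $2$ to $0$ destroys several of the cancellations used above; I would exploit this degeneration to exhibit an explicit $\lambda$ for which $h-\lambda\,\mathrm{id}$ has an $\F_q$-kernel of dimension at least $2$, so that $\mathrm{rank}\,D(\lambda)\le 4$ and $\cC$ is not MRD, establishing the ``only if'' direction. The idealisers are then a direct computation: the scalar maps $m_\gamma\colon x\mapsto\gamma x$ satisfy $m_\gamma\circ h\in\cC$ for all $\gamma\in\F_{q^6}$, so $L(\cC)\supseteq\F_{q^6}$; and for $\gamma\in\F_{q^2}$ one checks $h(\gamma x)=\gamma^{q}h(x)$, whence $h\circ m_\gamma=\gamma^q h\in\cC$ and $m_\gamma\in R(\cC)$, giving $R(\cC)\supseteq\F_{q^2}$. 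Once $U_h$ is scattered, the reverse containments follow by the standard argument for these codes (as in \cite{LTZ2} and \cite[Appendix B]{PhDthesis}), yielding $L(\cC)\cong\F_{q^6}$ and $R(\cC)\cong\F_{q^2}$.

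Finally, for the inequivalence when $q\equiv\pm2\pmod5$, I would pass to the associated maximum scattered $\F_q$-linear set $L_{U_h}$ of $\PG(1,q^6)$ and show it is not $\PGaL(2,q^6)$-equivalent to any of the previously known maximum scattered linear sets, the decisive distinguishing feature being $c\notin\F_q$, which separates these from the $c\in\F_q$ families; since equivalence of the codes is equivalent to equivalence of their adjoints (and transfers to the linear sets), inequivalence of $L_{U_h}$ from the known linear sets forces $\cC$ to be inequivalent to the previously known MRD-codes. This last step is mostly a matter of computing a finite list of $\PGaL$-invariants and comparing, and I expect it to be routine once scatteredness is in hand.
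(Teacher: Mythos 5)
Your bookkeeping is sound (the dimension count, the reduction of the MRD property with $d=5$ to scatteredness of $U_h$, and the containments $L(\cC)\supseteq\F_{q^6}$ and $R(\cC)\supseteq\F_{q^2}$ via $h(\gamma x)=\gamma^q h(x)$ for $\gamma\in\F_{q^2}$), but the two steps carrying all the content are missing. For $q$ odd, $q\equiv\pm2\pmod 5$, you propose a direct primal-side elimination: assume $\dim_{\F_q}\ker(h-\lambda\,\mathrm{id})\ge 2$, write $x\,h(y)-y\,h(x)=0$, eliminate, and fold through norms. You never carry this out, and it is exactly where the difficulty sits: on the primal side the critical kernel dimension ($2$) is far below the $q$-degree ($5$), so the condition is the vanishing of all $5\times 5$ minors of the Dickson matrix $D(\lambda)$, for which no closed characterization is available. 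The paper's key idea, absent from your proposal, is to pass to the Delsarte dual (Result \ref{result:dual}): the dual code $\D=\langle x^q,x^{q^3},-x+x^{q^2},cx-x^{q^4}\rangle_{\F_{q^6}}$ consists of polynomials of $q$-degree at most $4$, so failure of the MRD property forces an element of $q$-degree $4$ with a $4$-dimensional kernel, i.e.\ a $q$-polynomial with \emph{maximum kernel}, and the complete characterization of \cite{CsMPZ2018} converts this into the explicit System \eqref{eq:rel}. That finite system is then disposed of by parametrizing putative solutions --- solving the linearized equation $T^q+c\lambda T-\lambda=0$ in closed form via \cite[Theorem 1.22]{Hir} (Proposition \ref{formsolution}) --- and by resultant computations whose terminal obstructions ($\lambda^2-\lambda-1=0$, $L^2+5L-5=0$, $5L^2-5L+1=0$ with $\lambda\in\F_q$) visibly require $q\equiv\pm2\pmod 5$. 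Your plan has no mechanism producing such a finite system; it is the route that was only pushed through for $q\equiv0,\pm1\pmod 5$ in \cite{CsMZ2018}. Likewise, for $q$ even you merely assert that an explicit $\lambda$ with $\dim_{\F_q}\ker(h-\lambda\,\mathrm{id})\ge 2$ exists; the paper's witness again lives on the dual side ($\gamma=0$, $\beta^{q+1}=c^{-q}$, $\alpha=c^{q+1}\beta^{q^2}$), and translating it into a primal witness is not immediate, so your ``only if'' direction is unsubstantiated as written.

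The inequivalence argument is also flawed as stated. The proposed distinguishing feature ``$c\notin\F_q$'' is not a $\GaL(2,q^6)$- (or $\PGaL$-) invariant: the coefficient field of one particular defining polynomial is not preserved under equivalence. Moreover, when $q\equiv\pm2\pmod 5$ the old family $U^4_c$ does not even exist over the same field (it requires $q\equiv0,\pm1\pmod 5$), so the comparison actually needed is with $U^{1,6}_s$, $U^{2,6}_{s,\delta}$ and $U^{3,6}_{s,\delta}$, where your criterion says nothing. The paper's mechanism is Result \ref{result:MRDLS} (code equivalence equals $\GaL(2,q^6)$-equivalence of the subspaces) followed by comparing the orders of the $\GL(2,q^6)$-stabilisers of the subspaces ($q^6-1$, $q^2-1$, $q^3-1$), which eliminates $U^{1,6}_s$ and $U^{3,6}_{s,\delta}$; for $U^{2,6}_{s,\delta}$, whose stabiliser has the same order $q^2-1$ as that of $U_c$, it invokes the congruence-independent computations of \cite[Propositions 5.2 and 5.3]{CsMZ2018}. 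Your ``finite list of $\PGaL$-invariants'' names no invariant capable of separating $U_c$ from $U^{2,6}_{s,\delta}$, which is the one genuinely delicate case. (A smaller caveat: $\PGaL$-equivalence of linear sets is coarser than $\GaL$-equivalence of subspaces; linear-set inequivalence would indeed imply code inequivalence, but the paper proves Theorem \ref{main} at the subspace level and treats the linear-set statement, Theorem \ref{thm:linset}, separately.)
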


Since both the adjoint and the Delsarte dual operations preserve the equivalence of MRD-codes, we have also that the MRD-codes presented in Theorem \ref{main} are not equivalent neither to the adjoint nor to the Delsarte dual of any previously known MRD-code.

\section{$\F_q$-linear MRD-codes and maximum scattered $\F_q$-subspaces}

An $\F_q$-subspace $U$ of rank $n$ of a $2$-dimensional $\F_{q^n}$-space $V$ is {\em maximum scattered} if it defines a scattered $\F_q$-linear set of the projective line $\PG(V,\F_{q^n})$, i.e. $\dim_{\F_q}(U\cap\la{\mathbf v}\ra_{\F_{q^n}})\leq 1$ for each ${\mathbf v}\in V\setminus\{{\mathbf 0}\}$.
Let $V=\F_{q^n}\times\F_{q^n}$, up to the action of the group $\GL(2,q^n)$, an $\F_q$-subspace $U$ of $V$ of rank $n$ can be written as $U=U_f=\{(x,f(x))\colon x\in\F_{q^n}\}$, for some $f\in\cL_{n,q}$.
\bigskip

Sheekey in \cite{Sh} made a breakthrough in the construction of new linear MRD-codes using linearized polynomials (see also \cite{LTZ}).

In \cite{Sh}, the author proved the following result (which have been generalized in \cite[Section 2.7]{Lu2017} and \cite{ShVdV}, see also \cite[Result 4.7]{CsMPZ2019}).

\begin{result}\label{result:MRDLS}	
$\cC$ is an $\F_q$-linear MRD-code of $\cL_{n,q}$ with minimum distance $n-1$ and with left-idealiser isomorphic to $\F_{q^n}$ if and only if up to equivalence
\[\cC=\la x,f(x)\ra_{\F_{q^n}}\]
for some $f \in \cL_{n,q}$	and the $\F_q$-subspace
\[U_{\cC}=\{(x,f(x)) \colon x\in \F_{q^n}\}\]
is a maximum scattered $\F_q$-subspace of $\F_{q^n}\times\F_{q^n}$.

Also, two $\F_q$-linear MRD-codes $\cC$ and $\cC'$ of $\mathcal{L}_{n,q}$, with minimum distance $n-1$ and with left idealisers isomorphic to $\F_{q^n}$, are equivalent if and only if $U_\cC$ and $U_{\cC'}$ are $\Gamma {\mathrm L}(2,q^n)$-equivalent.

\end{result}

So far, the known non-equivalent (under $\Gamma\mathrm{L}(2,q^n)$) maximum scattered $\F_q$-subspaces, yielding to the known non-equivalent $\F_q$-linear MRD-codes with left idealiser isomorphic to $\F_{q^n}$, are
\begin{enumerate}
\item $U^{1,n}_s:= \{(x,x^{q^s}) \colon x\in \F_{q^n}\}$, $1\leq s\leq n-1$, $\gcd(s,n)=1$, see \cite{BL2000,CSZ2016};
\item $U^{2,n}_{s,\delta}:= \{(x,\delta x^{q^s} + x^{q^{n-s}})\colon x\in \F_{q^n}\}$, $n\geq 4$, $\N_{q^n/q}(\delta)\notin \{0,1\}$ \footnote{This condition  implies $q\neq 2$.}, $\gcd(s,n)=1$, see \cite{LP2001} for $s=1$, \cite{Sh,LTZ} for $s\neq 1$;
\item $U^{3,n}_{s,\delta}:= \{(x,\delta x^{q^s}+x^{q^{s+n/2}})\colon x\in \F_{q^{n}}\}$, $n\in \{6,8\}$, $\gcd(s,n/2)=1$, $\N_{q^n/q^{n/2}}(\delta) \notin \{0,1\}$, for the precise conditions on $\delta$ and $q$ see \cite[Theorems 7.1 and 7.2]{CMPZ} \footnote{Also here $q>2$, otherwise $L^{3,n}_{s,\delta}$ is not scattered.};
\item $U^{4}_{c}:=\{(x, x^q+x^{q^3}+c x^{q^5}) \colon x \in \F_{q^6}\}$, $q$ odd, $c^2+c=1$, $q\equiv 0,\pm 1 \pmod 5$, see \cite{CsMZ2018}.
\end{enumerate}

In this paper, we further investigate the $\F_q$-subspaces $U_f$ arising from the trinomial
\[f(x)=x^q+x^{q^3}+c x^{q^5}\in\F_{q^6}[x],\]
with the same assumption $c^2+c=1$ and for each value of $q$ (odd and even), showing that the $\F_q$-subspace $U_f$ of $\F_{q^n}\times\F_{q^n}$ is maximum scattered also for $q$ odd and $q\equiv \pm 2\pmod 5$, whereas it is not scattered for $q$ even.

To do this, as we will see in Section \ref{sec:proof}, studying the Delsarte dual of the code arising from $U_f$ was successful.

\section{The Delsarte dual of an RD-code}

In terms of linearized polynomials, the Delsarte dual of a rank distance code $\cC$ of $\cL_{n,q}$ can be interpreted as follows
\[ \cC^\perp=\{f \in \cL_{n,q} \colon b(f,g)=0 \,\,\, \forall g \in \cC \}, \]
where $b(f,g)=\mathrm{Tr}_{q^n/q}\left(\sum_{i=0}^{n-1} a_ib_i\right)$ for $f(x)=\sum_{i=0}^{n-1} a_ix^{q^i}$, $g(x)=\sum_{i=0}^{n-1} b_i x^{q^i} \in \F_{q^n}[x]$ and $\mathrm{Tr}_{q^n/q}$ denotes the trace function from $\F_{q^n}$ over $\F_q$.

The following result has been proved in \cite{Delsarte}.

\begin{result}\label{result:dual}
Let $\cC$ be an $\F_q$-linear RD-code of $\mathcal{L}_{n,q}$.
Then $\cC$ is an  $\F_q$-linear MRD-code if and only if $\cC^\perp$ is an $\F_q$-linear MRD-code.
\end{result}

\bigskip

Let us consider the set of $\cL_{6,q}$
\[\cC=\la x,x^q+x^{q^3}+cx^{q^5}\ra_{\F_{q^6}},\]
with $c^2+c=1$. By \cite[Theorem 5.1 and Proposition 6.1]{CsMZ2018}, $\cC$ is an $\F_q$-linear MRD-code of $\cL_{6,q}$ with dimension 12, minimum distance $5$, left idealiser isomorphic to $\F_{q^6}$ and right idealiser isomorphic to $\F_{q^2}$ when $q$ is odd and $q\equiv 0,\pm 1 \pmod 5$.

In order to investigate the remaining cases, by Result \ref{result:dual}, we can consider the Delsarte dual RD-code of $\cC$, which is equivalent to
\[ \D=\langle x^q,x^{q^3},-x+x^{q^2}, c^q x - x^{q^4} \rangle_{\F_{q^6}}.\]
Our aim is now to establish under which conditions the RD-code
\[ \D=\langle x^q,x^{q^3},-x+x^{q^2}, c x - x^{q^4} \rangle_{\F_{q^6}} \]
is an MRD-code\footnote{We write $c$ instead of $c^q$, since $c^q$ satisfies $x^2+x=1$.}.

\medskip

The $\F_q$-linear RD-code $\D$ is an MRD if and only if for each nonzero element $f \in \D$ we get $\dim_{\F_q} \ker f \leq 3$.
Since the maximum $q$-degree of the polynomials in $\D$ is $4$ it suffices that do not exist $\alpha,\beta$ and $\gamma$ in $\F_{q^6}$ such that the kernel of
\[ f(x)=\alpha x^q+\beta x^{q^3}+\gamma(-x+x^{q^2})+c x-x^{q^4}= \]
\[ = (-\gamma+c )x+\alpha x^q+ \gamma x^{q^2} +\beta x^{q^3}-x^{q^4} \]
has dimension $4$. Taking into account the characterization of maximum kernel $q$-polynomials when $k=4$ and $n=6$ (cf. \cite[Section 3.4]{CsMPZ2018}) we have the following result.
\begin{proposition}\label{main:prop}
The set of $q$-polynomials
\[\cC=\la x,x^q+x^{q^3}+cx^{q^5}\ra_{\F_{q^6}},\]
with $c^2+c=1$ is an $\F_q$-linear MRD-code of $\cL_{6,q}$ with dimension 12, minimum distance 5 and left idealiser isomorphic to $\F_{q^6}$, if and only if the system
\begin{small}
\begin{equation}\label{eq:rel}
\left\{ \begin{array}{llllllr}
\alpha \neq 0\\
(-\gamma+c)^{\frac{q^6-1}{q-1}}=1\\
(-\gamma +c)[-(-\gamma+c)^{q^4+q^2}+\beta^{q^5+q^4}(-\gamma +c)^{q^4+q^3+q^2}+\beta^{q^2+q}]=1\\
\alpha=-(-\gamma+c)^{q+1}\beta^{q^2}\\
\gamma = -(-\gamma+c)^{q^2+1}+\beta^{q^3+q^2}(-\gamma +c)^{q^2+q+1}\\
\beta= (-\gamma +c)^{q^3+q^2+1} \beta^{q^4}+ \beta^{q^2}(-\gamma+c)^{q^3+q+1}-\beta^{q^4+q^3+q^2}(-\gamma+c)^{q^3+q^2+q+1}
\end{array} \right.
\end{equation}
\end{small}
has no solutions $\alpha,\beta$ and $\gamma$ in $\F_{q^6}$.
\end{proposition}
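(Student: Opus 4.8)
The plan is to pass to the Delsarte dual, where being MRD is equivalent to a bound on the kernel dimensions of the code's elements, and then to convert that bound into the explicit system \eqref{eq:rel} by invoking the maximum-kernel characterisation of \cite[Section 3.4]{CsMPZ2018}. Throughout I use that, since $\cC=\langle x,\,x^q+x^{q^3}+cx^{q^5}\rangle_{\F_{q^6}}$ is the $\F_{q^6}$-span of two $\F_{q^6}$-independent maps, its $\F_q$-dimension is unconditionally $12$; a code of this dimension in $\cL_{6,q}$ has minimum distance at most $5$, with equality exactly when it is MRD. Thus ``minimum distance $5$'' is synonymous with ``MRD'', and the left idealiser is then forced to be $\cong\F_{q^6}$ by the $\F_{q^6}$-linearity of $\cC$ together with Result \ref{result:MRDLS}. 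It therefore suffices to characterise when $\cC$ is MRD.

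By Result \ref{result:dual}, $\cC$ is MRD if and only if its Delsarte dual is, and the dual is equivalent to $\D=\langle x^q,x^{q^3},-x+x^{q^2},cx-x^{q^4}\rangle_{\F_{q^6}}$. As $\D$ is a $4$-dimensional $\F_{q^6}$-subspace of $\cL_{6,q}$, it has exactly $q^{24}$ elements, which already meets the Singleton-like bound for a code of minimum distance $3$; hence $\D$ is MRD precisely when every nonzero $f\in\D$ satisfies $\dim_{\F_q}\ker f\le 3$. Since every element of $\D$ has $q$-degree at most $4$, the sole obstruction is the existence of a nonzero $f\in\D$ whose kernel has dimension exactly $4$.

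Next I would pin down the shape of such an $f$. A kernel of dimension $4$ forces $q$-degree exactly $4$, so the coefficient of $x^{q^4}$ is nonzero; as scaling by a nonzero element of $\F_{q^6}$ leaves the kernel unchanged, I may normalise the fourth generator to have coefficient $1$. Consequently $\D$ contains a nonzero element of kernel dimension $4$ if and only if there exist $\alpha,\beta,\gamma\in\F_{q^6}$ for which
\[f(x)=(-\gamma+c)x+\alpha x^q+\gamma x^{q^2}+\beta x^{q^3}-x^{q^4}\]
has maximum kernel; the converse direction is clear, since any such triple yields a nonzero element of $\D$ (its $x^{q^4}$-coefficient equals $-1$) of kernel dimension $4$.

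It remains to apply the characterisation of $q$-polynomials with maximum kernel in the case $k=4$, $n=6$ from \cite[Section 3.4]{CsMPZ2018} to the displayed $f$, whose coefficients are $a_0=-\gamma+c$, $a_1=\alpha$, $a_2=\gamma$, $a_3=\beta$, $a_4=-1$ and $a_5=0$. Substituting these into the stated conditions and simplifying should produce exactly the system \eqref{eq:rel}, so that, chaining the equivalences above, $\cC$ is MRD (hence of dimension $12$, minimum distance $5$ and left idealiser $\cong\F_{q^6}$) if and only if \eqref{eq:rel} has no solution. I expect the last step to be the main obstacle: one must correctly instantiate the rather intricate maximum-kernel conditions for $k=4$, $n=6$ and carry the resulting relations through a nontrivial algebraic simplification into the compact form \eqref{eq:rel}. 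Everything preceding it is a clean chain of reductions.
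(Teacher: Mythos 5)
Your proposal is correct and follows essentially the same route as the paper: pass to the Delsarte dual $\D=\langle x^q,x^{q^3},-x+x^{q^2},cx-x^{q^4}\rangle_{\F_{q^6}}$ via Result \ref{result:dual}, reduce the MRD property to the nonexistence of a nonzero element of $\D$ with kernel of dimension $4$, normalise the coefficient of $x^{q^4}$ using $\F_{q^6}$-linearity of $\D$, and instantiate the maximum-kernel characterisation of \cite[Section 3.4]{CsMPZ2018} with $k=4$, $n=6$ on $f(x)=(-\gamma+c)x+\alpha x^q+\gamma x^{q^2}+\beta x^{q^3}-x^{q^4}$. The concluding substitution that you leave as an expected computation is precisely the step the paper also delegates to the cited characterisation, so nothing essential is missing.
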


In the next section we will study System \eqref{eq:rel} when $q$ is odd, $q\equiv \pm 2\pmod 5$ and when $q$ is even separately.

\section{Proof of Theorem \ref{main}}\label{sec:proof}
In this section we prove the main theorem of the paper, showing that System \eqref{eq:rel} has solutions in $\F_{q^6}$ if and only if $q$ is even. By Result \ref{result:dual}, taking \cite[Theorem 5.1 and Proposition 6.1]{CsMZ2018} into account, System \eqref{eq:rel} has no solutions $\alpha,\beta$ and $\gamma$ in $\F_{q^6}$ when $q\equiv0,\pm1\pmod{5}$. Hence, we have to investigate the remaining cases. The resultants presented in this section have been obtained by using the software package MAGMA \cite{magma}.

\subsection{The $q$ odd case, $q\equiv \pm 2\pmod 5$}
From \cite[Section 1.5 (xiv)]{Hir} it follows that $c \in \F_{q^2}\setminus\F_q$, and so $c$ and $c^q$ are the two distinct roots of $x^2+x-1$. Also $c^{q+1}=c+c^q=-1$.


Our aim now is to show that the system
\begin{equation}\label{eq:rel2}
\left\{ \begin{array}{llllllr}
(-\gamma +c)[-(-\gamma+c)^{q^4+q^2}+\beta^{q^5+q^4}(-\gamma +c)^{q^4+q^3+q^2}+\beta^{q^2+q}]=1\\
\gamma = -(-\gamma+c)^{q^2+1}+\beta^{q^3+q^2}(-\gamma +c)^{q^2+q+1}
\end{array} \right.
\end{equation}
has no solutions in the variables $\gamma$ and $\beta$ over $\mathbb{F}_{q^6}$ and as a consequence System \eqref{eq:rel} does not have solutions.
It is straightforward to see that the previous system admits solutions if and only if the following system
\begin{equation}\label{eq:rel3}
\left\{ \begin{array}{llllllr}
\gamma=-\gamma^{q^3}(-\gamma+c)^{q^2+q+1}\\
\left(\frac{1}{-\gamma+c}-\gamma^{q^2}\right)^{\frac{q^6-1}{q+1}}=1
\end{array} \right.
\end{equation}
admits $\F_{q^6}$-rational solutions in the variable $\gamma$.
Clearly, System \eqref{eq:rel2} may be written as
\begin{equation}\label{eq:rel4}
\left\{ \begin{array}{llllllr}
\left(-\gamma^{q-1}(-\gamma+c)\right)^{q^2+q+1}=1\\
\left(\frac{1}{-\gamma+c}-\gamma^{q^2}\right)^{\frac{q^6-1}{q+1}}=1.
\end{array} \right.
\end{equation}

\medskip

The following prelimiary result holds.

\begin{proposition}\label{formsolution}
If $x$ is a solution of System \eqref{eq:rel4}, then
\[ x=\frac{2}{\lambda^{q^2+q}(c+2)-\lambda^{q^2}c+c}, \]
for some $\lambda \in \F_{q^3}^*$ such that $\lambda^{q^2+q+1}=1$.
\end{proposition}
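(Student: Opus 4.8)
The plan is to extract the parametrization \emph{from the first equation of \eqref{eq:rel4} alone}, since every solution of the system satisfies that equation. Writing $\gamma$ for the solution $x$, set
\[ \lambda:=-\gamma^{q-1}(-\gamma+c). \]
The first equation reads exactly $\lambda^{q^2+q+1}=1$, so $\lambda$ is a $(q^2+q+1)$-th root of unity in $\F_{q^6}^*$. Because $q^2+q+1$ divides $q^3-1$, the subgroup of such roots lies in the unique subgroup $\F_{q^3}^*$ of order $q^3-1$; hence $\lambda\in\F_{q^3}^*$ with $\lambda^{q^2+q+1}=1$, which are precisely the conditions in the statement. Note also $\gamma\notin\{0,c\}$, since either value would force $\lambda=0$, contradicting $\lambda^{q^2+q+1}=1$.

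The core of the argument is to invert the defining relation. Clearing denominators gives $\gamma^q(\gamma-c)=\lambda\gamma$, i.e. $\gamma^q=\lambda\gamma/(\gamma-c)$, which I read as a Frobenius-twisted M\"obius relation and then propagate. Applying the $q$-power map and substituting repeatedly, I compute $\gamma^{q^2}$ and then $\gamma^{q^3}$ as linear-fractional functions of $\gamma$. The arithmetic of $c$ in this case is what makes things collapse: one uses $c^{q^2}=c$, $c+c^q=-1$ and $c\,c^q=-1$, together with $\lambda\in\F_{q^3}$ and the key cancellation $\lambda^{q^2+q+1}=1$ (which turns the would-be coefficient $\lambda^{q^2}\lambda^{q+1}$ into $1$). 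The outcome is the clean relation
\[ \gamma^{q^3}=\frac{\gamma}{a\gamma+c},\qquad a:=\lambda^{q+1}-c\lambda-1. \]

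Finally I close the loop using $\gamma^{q^6}=\gamma$: applying the $q^3$-power map to this identity (with $c^{q^3}=c^q$) yields $\gamma=\gamma^{q^3}/(a^{q^3}\gamma^{q^3}+c^q)$, and substituting the expression for $\gamma^{q^3}$ collapses, again via $c\,c^q=-1$, to $(a^{q^3}+c^q a)\gamma=2$. A direct simplification of $a^{q^3}+c^q a$, using $\lambda^{q^3}=\lambda$ and the identities $1+c^q=-c$, $1-c^q=c+2$, gives $a^{q^3}+c^q a=-c\lambda^{q+1}+(c+2)\lambda+c$, whence $\gamma=2/\bigl(-c\lambda^{q+1}+(c+2)\lambda+c\bigr)$. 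Since $\lambda^{-1}$ ranges over the same root set as $\lambda$, relabelling $\lambda\mapsto\lambda^{-1}$ and using $\lambda^{q^2+q}=\lambda^{-1}$, $\lambda^{q^2}=\lambda^{-(q+1)}$ rewrites the denominator as $\lambda^{q^2+q}(c+2)-\lambda^{q^2}c+c$, which is exactly the claimed expression. The main obstacle is the middle step: carrying the two Frobenius iterations through the linear-fractional bookkeeping and checking that every simplification is legitimate for an actual solution (no vanishing denominators); the $\GL_2$/M\"obius viewpoint is what keeps this under control and simultaneously disposes of the degenerate cases $\gamma\in\{0,c\}$.
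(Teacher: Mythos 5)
Your proof is correct, and its core technique is genuinely different from the paper's. Both arguments pivot on the same key relation: the paper combines the two equations of \eqref{eq:rel4} (introducing $y$ with $y^{q+1}=\frac{1}{-x+c}-x^{q^2}$ and cancelling $x^{q^2}$) to reach $-\lambda x^{q-1}=\frac{1}{-x+c}$, which is exactly your $\gamma^q=\lambda\gamma/(\gamma-c)$ up to the inversion $\lambda\mapsto\lambda^{-1}$ you perform at the end; as you observe, this relation already follows from the first equation alone, so your derivation is in fact leaner on this point. The divergence is in how the equation is solved. The paper substitutes $T=1/x$ to get the $q$-affine equation $T^q+c\lambda T-\lambda=0$, invokes \cite[Theorem 1.22]{Hir} after checking the norm condition $(-\lambda c)^{(q^6-1)/(q-1)}=-1\neq 1$, and then simplifies the closed-form unique solution $\bar T$. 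You instead iterate the Frobenius-twisted M\"obius relation to obtain $\gamma^{q^3}=\gamma/(a\gamma+c)$ with $a=\lambda^{q+1}-c\lambda-1$ (I verified this: the numerator coefficient $\lambda^{q^2+q+1}$ collapses to $1$ exactly as you claim, using $c^{q^2}=c$ and $cc^q=-1$), and close with $\gamma^{q^6}=\gamma$, yielding the linear relation $(a^{q^3}+c^qa)\gamma=2$; the simplification $a^{q^3}+c^qa=-c\lambda^{q+1}+(c+2)\lambda+c$ and the final relabelling $\lambda\mapsto\lambda^{-1}$ (legitimate, since the admissible $\lambda$ form a group closed under inversion, and $\lambda^{-1}=\lambda^{q^2+q}$, $\lambda^{-(q+1)}=\lambda^{q^2}$) land exactly on the paper's expression. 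The two routes are equivalent in content --- note that the hypothesis $q$ odd enters the paper through the norm value $-1\neq 1$, and enters your argument through $2\neq 0$, which together with $\gamma\neq 0$ also forces $a^{q^3}+c^qa\neq 0$ so your final division is legitimate --- but yours is self-contained and elementary, trading the citation of an external uniqueness theorem and its explicit solution formula for three rounds of M\"obius bookkeeping, which also handles the nonvanishing of intermediate denominators cleanly since each is pinned down by an identity with nonzero left-hand side.
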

\begin{proof}
By the second equation of \eqref{eq:rel4} it follows that $x\ne 0$ and
\[ \frac{1}{-x+c}-x^{q^2} \]
is a $(q+1)$-th power in $\F_{q^6}^*$. Therefore, there exists $y \in \F_{q^6}^*$ such that
\begin{equation}\label{eq:y1}
y^{q+1}= \frac{1}{-x+c}-x^{q^2},
\end{equation}
and hence
\[ -x+c=\frac{1}{y^{q+1}+x^{q^2}}. \]
The first equation of \eqref{eq:rel4} reads
\[ \left( -\frac{x^{q-1}}{y^{q+1}+x^{q^2}} \right)^{q^2+q+1}=1. \]
Hence, this is equivalent to the existence of $\lambda \in \F_{q^3}^*$ with $\lambda^{q^2+q+1}=1$ and
\begin{equation}\label{eq:y2}
-\lambda x^{q-1}=y^{q+1}+x^{q^2}.
\end{equation}
By Equations \eqref{eq:y1} and \eqref{eq:y2} we have
\[ -x^{q^2}-\lambda x^{q-1}=\frac{1}{-x+c}-x^{q^2}, \]
and hence
\[ \frac{1}\lambda \left( \frac{1}x \right)^q+c \frac{1}x-1=0.\]
Denoting $T:=\frac{1}x$, the above equation becomes
\begin{equation}\label{eq:T}
T^q+c\lambda T-\lambda=0.
\end{equation}
By \cite[Theorem 1.22]{Hir}, since
\[(-\lambda c)^{q^5+q^4+q^3+q^2+q+1}=\lambda^{(q^2+q+1)(q^3+1)}c^{3(q+1)}=-1 \neq 1, \]
Equation \eqref{eq:T} has one solution and it is
\begin{eqnarray*}
\bar T:=&\lambda(-\lambda c)^{q^5+q^4+q^3+q^2+q}+\lambda^q(-\lambda c)^{q^5+q^4+q^3+q^2}+\quad\quad\quad\\&+\lambda^{q^2}(-\lambda c)^{q^5+q^4+q^3}+\lambda^{q^3}(-\lambda c)^{q^5+q^4}\lambda^{q^4}(-\lambda c)^{q^5}+\lambda^{q^5}.
\end{eqnarray*}
Since  $\lambda \in \F_{q^3}^*$ and $c^2+c-1=0$ we have
\[ \bar T=\frac{c+\lambda^{q^2+q}(c+2)-c\lambda^{q^2}}2\]
and the assertion follows.
\end{proof}

By way of contradiction, suppose that System \eqref{eq:rel3} admits at least one solution in $\gamma$, say $x$. Hence $x$ is as in Proposition \ref{formsolution} and the variables of System \eqref{eq:rel4} are $\lambda$ and $c$. In particular the first equation becomes $\lambda^{q^2+q+1}=1$. Our aim is to show that a solution $\lambda$ of the new system obtained in this way does not exists.

Looking at each $q$-power of $\lambda$ as a distinct variable, we define
\[L:=\lambda, M:=\lambda^q, N:=\lambda^{q^2}, C:=c\] and consider the Frobenius images $\lambda^{q^i}$, with $i=0,1,2$ as variables in System \eqref{eq:rel4}. Hence,  we have a weaker system, say $\Sigma$, in the variables $L,M,N,C$.
We want to show that System $\Sigma$ has no solutions over $\mathbb{F}_{q^6}$. Denote by $EQ1$ and by $NEQ2$ the first equation and the numerator of the system $\Sigma$, respectively. Hence
\[EQ1:=LMN-1=0.\]
%
We have that
\begin{eqnarray*}
\mathrm{Resultant}(\mathrm{Resultant}(NEQ2,C^2+C-1,C),EQ1,L)=\\ =2^{14} N^8 \cdot M^{12}\cdot \mathrm{COND1}^2\cdot \mathrm{COND2}^2\cdot \mathrm{COND3}^2,
\end{eqnarray*}
where
\[ \mathrm{COND1}:=M^2N^2 - 2 MN^2 - 4MN + N^2 + 4N - 1, \]
\[ \mathrm{COND2}:=M^2N^2 + 4MN^2 - 4MN - N^2 + 2N - 1, \]
and
\[ \mathrm{COND3}:=M^2N^2 + 4MN^2 - 2MN - N^2 - 4N + 1. \]
Hence, three cases occur.

\begin{itemize}
  \item $\mathrm{COND1}=0$. 
  
Let $Z:=NM-N$, we get
\[ Z^2-4Z-1=0, \]
which implies $Z \in \F_q$ since $\lambda \in \F_{q^3}$.
Therefore $Z-Z^q=0$ and hence the following two resultants should be zero
\[ R1:=\mathrm{Resultant}(Z-Z^q,EQ1,N)=0 \]
and
\[ R2:=\mathrm{Resultant}(\mathrm{COND1},EQ1,N)=0.\]
Also,
\[\textrm{Resultant}(R1,R2,M)=4 L^2 (L^2 - L - 1)=0,\]
i.e. $\lambda \in \F_{q^2}\cap\F_{q^3}=\F_{q}$, which implies $\lambda^3=1$. This means that either $\lambda=1$ or $q\equiv 1 \pmod 3$ and $\lambda^2+\lambda+1=0$. But both cases contradict condition $\lambda^2-\lambda-1=0$.

  \item $\mathrm{COND2}=0$. 
 
In this case the following resultants should be zero
\[ Q1:= \mathrm{Resultant}(\mathrm{COND2},EQ1,N)=0 \]
and
\[ Q2:= \mathrm{Resultant}(\mathrm{COND2}^q,EQ1,N)=0. \]
This implies that
\[\mathrm{Resultant}(Q1,Q2,Lq)=2^4 L^6 (L^2 + 5L - 5) =0.\]
Again $\lambda\in\F_q$ and we get a contradiction when $q\equiv\pm 2 \pmod 5$.

  \item $\mathrm{COND3}=0$. 

Then
\[ S1:=\mathrm{Resultant}(\mathrm{COND3},EQ1,N)=0 \]
and
\[ S2:=\mathrm{Resultant}(\mathrm{COND3},EQ1,N)=0, \]
and then
\[ \mathrm{Resultant}(S1,S2,Lq)= 2^4 L^4(5L^2 - 5L + 1)=0, \]
again a contradiction.
\end{itemize}

The proof is now complete.
%

\subsection{The $q$ even case}

Differently from what happens in the case $q$ odd, we want to show that, when $q$ is even, System \eqref{eq:rel} admits at least a solution of type $(\alpha,\beta,0) \in \mathbb{F}_{q^6}^3$, with $\alpha$ and $\beta$ not zero. Indeed, substituting the value $\gamma=0$ in  System \eqref{eq:rel} we get

\begin{small}
\begin{equation}\label{eq:rel1.1}
\left\{ \begin{array}{llllllr}
\alpha \neq 0\\
c^{\frac{q^6-1}{q-1}}=1\\
c[c^{q^4+q^2}+\beta^{q^5+q^4}c^{q^4+q^3+q^2}+\beta^{q^2+q}]=1\\
\alpha=c^{q+1}\beta^{q^2}\\
0 = c^{q^2+1}+\beta^{q^3+q^2}c^{q^2+q+1}\\
\beta= c^{q^3+q^2+1} \beta^{q^4}+ \beta^{q^2}c^{q^3+q+1}+\beta^{q^4+q^3+q^2}c^{q^3+q^2+q+1}
\end{array} \right.
\end{equation}
\end{small}

Note that the conditions on $\alpha$ will be automatically satisfied once we define $\alpha:=c^{q+1}\beta^{q^2}$ forcing $\beta \ne 0$. Also, the second equation is trivially satisfied since $c \in \mathbb{F}_4^*$ and $c \in \mathbb{F}_{q^2}$.
Hence System \eqref{eq:rel1.1} has solutions if and only if the following system admits solutions

\begin{small}
\begin{equation}\label{eq:rel1.2}
\left\{ \begin{array}{llllllr}
c[c^{q^4+q^2}+\beta^{q^5+q^4}c^{q^4+q^3+q^2}+\beta^{q^2+q}]=1\\
1 =\beta^{q^3+q^2}c^q\\
\beta= c^{q+2} \beta^{q^4}+ \beta^{q^2}c^{2q+1}+\beta^{q^4+q^3+q^2}c^{2(q+1)}
\end{array} \right.
\end{equation}
\end{small}

Since $c^2+c+1=0$, then  $c\in\F_{q^2}$ and $c^3=1$. Then there exists $\beta\in\F_{q^6}$ such that $\beta^{q+1}=1/c^q$. Hence $\beta^{q^3+q^2}=(1/c^q)^{q^2}=1/c^q$ and the second equation of \eqref{eq:rel1.2} is satisfied.
Also, the first equation of System \eqref{eq:rel1.2} reads
$$1=c\bigg[c^2+\frac{1}{c^q}c^{q+2}+\frac{1}{c}\bigg],$$
and hence it is fulfilled. At this point the third equation of \eqref{eq:rel1.2} becomes
$$1= c^{q+2}\beta^{q^4-1}+ c^{2q+1}\beta^{q^2-1}+\beta^{q^4+q^3+q^2-1}c^{2(q+1)},$$
and using that $\beta^{q+1}=1/c^q$ and $c^3=1$, we get that it is satisfied.

\subsection{The right idealiser of RD-codes of Theorem \ref{main}}
Following the computations in \cite[Appendix B]{PhDthesis}), we show that the right idealiser of the RD-codes presented in Theorem \ref{main} is isomorphic to $\F_{q^2}$.
Indeed, let $\varphi(x)$ be an element of $R(\cC)$. Since $\cC$ contains the identity map $\varphi(x) \in \cC$ and hence there exist $\alpha,\beta \in \F_{q^6}$ such that $\varphi(x)=\alpha x+ \beta x^q+\beta x^{q^3}+\beta c x^{q^5}$.
Also,
\[ (x^q+x^{q^3}+c x^{q^5})\circ\varphi(x)=\varphi(x)^q+\varphi(x)^{q^3}+c \varphi(x)^{q^5}\in \cC \]
implying the existence of $a,b \in \F_{q^6}$ such that
\[ \alpha^qx^q+\beta^q(x^{q^2}+x^{q^4}+c^q x)+\alpha^{q^3}x^{q^3}+\beta^{q^3}(x^{q^4}+x+c^{q^3}x^{q^2}) \]
\[+c(\alpha^{q^5}x^{q^5}+\beta^{q^5}(x+x^{q^2}+c^{q^5} x^{q^4})) =ax+b(x^q+x^{q^3}+c x^{q^5}), \]
which is a polynomial identity in $x$.
By comparing the coefficients of terms of degree $q$ and $q^3$ we get $\alpha \in \F_{q^2}$, and by comparing the coefficients of the terms of degree $q^2$ and $q^4$, taking into account that $c\in\F_{q^2}$, we get
\[\beta^q+c^q\beta^{q^3}+c\beta^{q^5}=0 \mbox{ and } \beta^q+\beta^{q^3}+c^{q+1}\beta^{q^5}=0.\]
Subtracting the second equation to the first one, we get $(c^q-1)(\beta^{q^3}-c\beta^{q^5})=0$. Since $c\neq 1$, then $\beta^{q^4}=c^q\beta$ and this equation admits a nonzero solution $\beta\in\F_{q^6}$ if and only if $c^3=1$, contradicting the condition $c^2+c-1=0$.


\subsection{The equivalence issue}
We want to finish this part of the paper showing that the $\F_q$-linear MRD-codes of $\cL_{6,q}$ defined in Theorem \ref{main} are not equivalent to the previously known MRD-codes.

%

From \cite[Section 6]{CMPZ} and \cite[Theorem 6.1]{CsMZ2018}, the previously known $\F_q$-linear MRD-codes of $\cL_{6,q}$ with dimension 12, minimum distance 5 and left idealiser isomorphic to $\F_{q^6}$, up to equivalence, arise from one of the following maximum scattered subspaces of $\F_{q^{6}}\times\F_{q^{6}}$:
$U^{1,6}_{s}$, $U^{2,6}_{s,\delta}$, $U^{3,6}_{s,\delta}$ and $U_c^4$.  Also, from Result \ref{result:MRDLS}, two $\F_q$-linear MRD-codes $\cC$ and $\cC'$ of $\mathcal{L}_{6,q}$, with minimum distance $5$ and with left-idealisers isomorphic to $\F_{q^6}$, are equivalent if and only if $U_\cC$ and $U_{\cC'}$ are $\Gamma {\mathrm L}(2,q^6)$-equivalent.

The stabilisers of the $\F_q$-subspaces above in the group $\GL(2,q^6)$ were determined in \cite[Sections 5 and 6]{CMPZ} and in \cite[Proposition 5.2]{CsMZ2018}. They have the following orders:
\begin{enumerate}
  \item for $U^{1,6}_s$ we have a group of order $q^6-1$,
  \item for $U^{2,6}_{s,\delta}$ and $U^4_c$ we have a group of order $q^2-1$,
  \item for $U^{3,6}_{s,\delta}$ we have a group of order $q^3-1$.
\end{enumerate}

Also, since the $\Gamma\mathrm{L}(2,q^6)$-equivalence preserves the order of such stabilisers and since the results of \cite[Propositions 5.2 and 5.3]{CsMZ2018} do not depend on the congruence of $q$ odd, using the same arguments we prove the last part of Theorem \ref{main}.

\section{New maximum scattered $\F_q$-linear sets of $\PG(1,q^6)$}

A point set $L$ of a line  $\Lambda=\PG(W,\F_{q^n})\allowbreak=\PG(1,q^n)$ is said to be an \emph{$\F_q$-linear set} of $\Lambda$ of rank $n$ if it is
defined by the non-zero vectors of an $n$-dimensional $\F_q$-vector subspace $U$ of the two-dimensional $\F_{q^n}$-vector space $W$, i.e.
\[L=L_U:=\{\la {\bf u} \ra_{\mathbb{F}_{q^n}} \colon {\bf u}\in U\setminus \{{\bf 0} \}\}.\]
One of the most natural questions about linear sets is their equivalence. Two linear sets $L_U$ and $L_V$ of $\PG(1,q^n)$ are said to be \emph{$\mathrm{P\Gamma L}$-equivalent} (or simply \emph{equivalent}) if there is an element in $\mathrm{P\Gamma L}(2,q^n)$ mapping $L_U$ to $L_V$. In the applications it is crucial to have methods to decide whether two linear sets are equivalent or not. This can be a difficult problem and some results in this direction can be found in \cite{CSZ2015, CMP}.

Linear sets of rank $n$ of $\PG(1,q^n)$ have size at most $(q^n-1)/(q-1)$.
A linear set $L_U$ of rank $n$ whose size achieves this bound is called \emph{maximum scattered}.
For applications of these objects we refer to \cite{OP2010} and \cite{Lavrauw}.

To make notation easier, by  $L^{i,n}_{s}$, $L^{i,n}_{s,\delta}$ and $L^4_{c}$  we will denote the $\F_q$-linear set defined by $U^{i,n}_{s}$, $U^{i,n}_{s,\delta}$ and $U^{4}_{c}$, respectively. The $\F_q$-linear sets $\mathrm{P\Gamma L}(2,q^n)$-equivalent to $L^{1,n}_s$ are called \emph{of pseudoregulus type}. It is easy to see that $L^{1,n}_1=L^{1,n}_s$ for any $s$ with $\gcd(s,n)=1$ and that $U^{2,n}_{s,\delta}$ is $\GL(2,q^n)$-equivalent to $U^{2,n}_{n-s,\delta^{-1}}$.

In \cite[Theorem 3]{LP2001} Lunardon and Polverino proved that $L^{2,n}_{1,\delta}$ and $L^{1,n}_1$ are not $\mathrm{P}\Gamma \mathrm{L}(2,q^n)$-equivalent when $q>3$, $n\geq 4$. For $n=5$, in \cite{CMP2019} it is proved that  $L^{2,5}_{2,\delta}$ is $\mathrm{P}\Gamma\mathrm{L}(2,q^5)$-equivalent neither to $L^{2,5}_{1,\delta'}$ nor to $L^{1,5}_1$.

In \cite[Theorem 4.4]{CsMZ2018}, the authors proved that for $n=6,8$ the linear sets $L^{1,n}_1$, $L^{2,n}_{s,\delta}$ and $L^{3,n}_{s',\delta'}$ are pairwise non-equivalent for any choice of $s,s',\delta, \delta'$. Also in \cite[Theorem 5.4]{CsMZ2018} it has been proved that the linear set $L^4_c$ for $q$ odd and $q \equiv 0,\pm 1 \pmod{5}$ is not equivalent to the aforementioned maximum scattered linear sets of $\PG(1,q^6)$. This result has been obtained by \cite[Proposition 5.3]{CsMZ2018}, where the congruences of $q$ odd plays no role. Hence, using the same arguments, we have the following result.

\begin{theorem}\label{thm:linset}
The $\F_q$-linear set $L_c$ of rank 6 of $\PG(1,q^6)$ defined by the $\F_q$-subspace of $\F_{q^6}\times\F_{q^6}$
\[U_c=\{(x,x^q+x^{q^3}+cx^{q^5})\colon x\in\F_{q^6}\},\]
with $c^2+c=1$, is scattered if and only if $q$ is odd. Also, when $q\equiv \pm 2\pmod  5$ $L_c$ is not $\mathrm{P\Gamma L}(2,q^6)$-equivalent to the previously known
maximum scattered $\F_q$-linear sets of $\PG(1,q^6)$.
\end{theorem}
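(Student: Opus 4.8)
The plan is to reduce everything to an equivalence-of-linear-sets question and exploit the single invariant that the authors have already computed, namely the order of the stabiliser in $\GL(2,q^6)$. The logical backbone of \cref{thm:linset} has two independent halves. The scattered/non-scattered dichotomy ($L_c$ scattered $\iff$ $q$ odd) is essentially a restatement of \cref{main}: by \cref{result:MRDLS}, $U_c$ is maximum scattered precisely when the associated code $\cC=\la x, x^q+x^{q^3}+cx^{q^5}\ra_{\F_{q^6}}$ is an MRD-code with left idealiser $\F_{q^6}$, and \cref{main} establishes that this holds iff $q$ is odd. So the first thing I would do is cite \cref{main} (together with \cref{result:MRDLS}) to dispose of the scattered part, observing that when $q$ is even the failure to be scattered is exactly the content of the $q$-even subsection above.

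The second half is the inequivalence statement for $q\equiv\pm 2\pmod 5$, and here I would follow the blueprint the authors have already flagged: mimic \cite[Propositions 5.2 and 5.3]{CsMZ2018}. First I would establish that the stabiliser of $U_c$ in $\GL(2,q^6)$ has order $q^2-1$; this is the $\GL$-level analogue of the right-idealiser computation carried out in the previous subsection (the right idealiser being $\F_{q^2}$ is the algebraic shadow of this $q^2-1$), so I would adapt that argument, writing a generic $\GL(2,q^6)$ element stabilising $U_c$, imposing the condition $(x,f(x))\mapsto(x',f(x'))$, and comparing coefficients of the resulting $q$-polynomial identity to force the stabiliser into a group of order $q^2-1$. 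The crucial point, emphasised in the excerpt, is that this coefficient computation never uses the congruence class of $q$ mod $5$, only that $c^2+c=1$ with $c\notin\F_q$; hence the order $q^2-1$ persists verbatim in the new range $q\equiv\pm 2\pmod 5$.

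With the stabiliser orders in hand, the inequivalence argument is purely numerical. Since $\PGaL(2,q^6)$-equivalence of linear sets corresponds to $\GaL(2,q^6)$-equivalence of the defining subspaces, and such equivalences conjugate stabilisers (hence preserve their orders up to the field-automorphism factor, which is a fixed power of $6$), I would compare $q^2-1$ against the stabiliser orders $q^6-1$, $q^2-1$, $q^3-1$ of $U^{1,6}_s$, $U^{2,6}_{s,\delta}$, $U^{3,6}_{s,\delta}$ listed in the excerpt. This immediately separates $L_c$ from $L^{1,6}_s$ and $L^{3,6}_{s,\delta}$ by order. The remaining and genuinely delicate case is $U^{2,6}_{s,\delta}$, which shares the order $q^2-1$: here the order invariant is useless, and I would instead invoke the finer equivalence analysis of \cite[Proposition 5.3]{CsMZ2018}, which distinguishes $L_c$ from $L^{2,6}_{s,\delta}$ by examining the precise action of the stabiliser (for instance, the fixed points on $\PG(1,q^6)$ or the structure of the $\F_{q^6}$-linear maps realising the stabiliser) rather than merely its cardinality.

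The main obstacle, therefore, is exactly this $U^{2,6}_{s,\delta}$ case: proving that the two families are genuinely non-equivalent rather than coincidentally equiorder. The authors' saving grace—which I would lean on heavily—is that \cite[Proposition 5.3]{CsMZ2018} is already congruence-free, so the entire finer argument transplants unchanged to $q\equiv\pm 2\pmod 5$; one simply has to verify that no step secretly invoked $q\equiv 0,\pm1\pmod 5$. I would conclude by remarking that, because both the scattered property and the stabiliser computation are insensitive to the residue of $q$ mod $5$, the new range requires no new ideas beyond \cref{main}, only the observation that the earlier proofs were already uniform in $q$.
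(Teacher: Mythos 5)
Your proposal matches the paper's own proof in both halves: the scattered/non-scattered dichotomy is obtained from Theorem \ref{main} via the correspondence of Result \ref{result:MRDLS}, and the inequivalence for $q\equiv\pm 2\pmod 5$ is settled exactly as you suggest, by comparing the $\GL(2,q^6)$-stabiliser orders ($q^6-1$, $q^2-1$, $q^3-1$) and then invoking the congruence-free finer analysis of \cite[Proposition 5.3]{CsMZ2018} for the equiorder case $U^{2,6}_{s,\delta}$. The only cosmetic difference is that you offer to re-derive the stabiliser order $q^2-1$ by adapting the right-idealiser computation, whereas the paper simply cites \cite[Proposition 5.2]{CsMZ2018} and observes, as you do, that no step there depends on the residue of $q$ modulo $5$.
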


\section*{Final remark}
In this paper we have proved that the RD-code $\cC=\la x,f(x)\ra_{\F_{q^{2n}}}$ of $\cL_{2n,q}$, with $f(x)=x^q+x^{q^3}+cx^{q^5}\in\F_{q^6}$ ($n=3$) and $c^2+c+1=0$, is an MRD-code with dimension 12, minimum distance 5 and left idealiser isomorphic to $\F_{q^6}$ if and only if $q$ is odd. Computational results show that, for suitable choices of $c\in\F_{q^6}\setminus(\F_{q^2}\cup\F_{q^3})$ the previous trinomial produces MRD-codes also when $q\leq 64$ is even.

We strongly believe that the previous MRD-codes belong to a larger class of MRD-codes, arising from polynomials of type $f(x)=x^q+\sum_{i=1}^{n-1}a_{{2i+1}}x^{q^{2i+1}}\in\F_{q^{2n}}[x]$, under suitable assumptions on the coefficients $a_j$'s. In Table 1 we provide some explicit examples. They are the results of our successful searches using the software package MAGMA \cite{magma} for small values of $n$ and $q$. When a parameter $a_i$ appears in a row of the table it means that there exist explicit values of $a_i \in\F_{q^{2n}}$ for which the polynomial $f(x)$ gives rise to an MRD-code. Certainly, a careful study of the corresponding $\F_q$-subspaces of $\F_{q^{2n}}\times\F_{q^{2n}}$ should be undertaken in order to establish whether the MRD-codes are equivalent to the previously known ones. The authors are currently beginning work on these two projects.
%
%
%

\begin{table}[htp]
\[
\begin{array}{ |c|c|c| }
\hline
$n$ & $q$ & $f(x)$ \\
\hline
3 & q\leq 64, \text{even} & x^q+x^{q^3}+a_5x^{q^5} \\ \hline
3 & 3,5& x^q-x^{q^3}+a_5x^{q^5}   \\ \hline
3 & 3,5,7& x^q+a_3 x^{q^3}+a_3^2x^{q^5}   \\ \hline

4 & 3,5& x^q+x^{q^3}+x^{q^5}-x^{q^7}  \\ \hline
4 & 4& x^q+a_3^2x^{q^3}+a_3x^{q^5}+x^{q^7}  \\ \hline
5 & 3 & x^q+a_3x^{q^3}+a_5x^{q^5}+a_7x^{q^7}+a_9x^{q^9}  \\ \hline
\end{array}
\]
\caption{Computational results}
\label{table:cond}
\end{table}
%
%
%

\bigskip

\noindent
Giuseppe Marino\\
Dipartimento di Matematica e Applicazioni “Renato Caccioppoli”\\
Universit\'a degli Studi di Napoli “Federico II”\\
Via Cintia, Monte S.Angelo I-80126 Napoli\\
Italy\\
\emph{giuseppe.marino@unina.it}

\bigskip

\noindent
Maria Montanucci\\
Technical University of Denmark\\
Asmussens All\'e\\
Building 303B, room 150\\
2800 Kgs. Lyngby\\
Denmark\\
\emph{marimo@dtu.dk}

\bigskip

\noindent
Ferdinando Zullo\\
Dipartimento di Matematica e Fisica\\
Universit\`a degli Studi della Campania \emph{Luigi Vanvitelli}\\
Viale lincoln, 5\\
36100 Caserta\\
Italy\\
\emph{ferdinando.zullo@unicampania.it}


\begin{thebibliography}{100}

\bibitem{BL2000}
{\sc A. Blokhuis and M. Lavrauw:}
Scattered spaces with respect to a spread in $\mathrm{PG}(n,q)$,
\emph{Geom.\ Dedicata} {\bf 81} (2000), 231--243.

\bibitem{magma}
{\sc W. Bosma, J. Cannon and C. Playoust:}
{The Magma algebra system. I. The user language,}
\emph{ J. Symbolic Comput.} {\bf 24} (1997), 235--265.

\bibitem{CMP}
{\sc B. Csajb\'ok, G. Marino and O. Polverino:}
{Classes and equivalence of linear sets in $\mathrm{PG}(1,q^n)$},
\emph{J. Combin. Theory Ser. A} {\bf 157} (2018), 402--426.

\bibitem{CMP2019}
{\sc B. Csajb\'ok, G. Marino and O. Polverino:} A Carlitz type result for linearized polynomials, \emph{Ars Mathematica Contemporanea} {\textbf 16} (2019), 585--608.


\bibitem{CsMPYarxive}
{\sc B. Csajb\'ok, G. Marino, O. Polverino and Y. Zhou:} Maximum rank-distance codes with maximum left and right idealisers,
\href{https://arxiv.org/abs/1807.08774}{https://arxiv.org/abs/1807.08774}.

\bibitem{CMPZ}
{\sc B. Csajb\'ok, G. Marino, O. Polverino and C. Zanella:}
A new family of MRD-codes,
\emph{Linear Algebra Appl.} {\bf 548} (2018), 203--220.

\bibitem{CSMPZ2016}
{\sc B. Csajb\'ok, G. Marino, O. Polverino and F. Zullo:}
Maximum scattered linear sets and MRD-codes,
{\it J.\ Algebraic\ Combin.} {\bf 46} (2017), 1--15.

\bibitem{CsMPZ2018}
{\sc B. Csajb\'ok, G. Marino, O. Polverino and F. Zullo:}
A characterization of linearized polynomials with maximum kernel,
\emph{Finite Fields Appl.} {\textbf{56}} (2019), 109--130;

\bibitem{CsMPZ2019}
{\sc B. Csajb\'ok, G. Marino, O. Polverino and F. Zullo:}
A special class of scattered subspaces,
\href{https://arxiv.org/abs/1906.10590}{https://arxiv.org/abs/1906.10590}.

\bibitem{CsMZ2018}
{\sc B. Csajb\'ok, G. Marino and F. Zullo:}
New maximum scattered linear sets of the projective line,
\emph{Finite Fields Appl.} {\bf 54} (2018), 133--150.

\bibitem{CSZ2015}
{\sc B. Csajb\'ok and C. Zanella:}
On the equivalence of linear sets,
\emph{Des. Codes Cryptogr.} {\bf 81} (2016), 269--281.

\bibitem{CSZ2016}
{\sc B. Csajb\'ok and C. Zanella:}
On scattered linear sets of pseudoregulus type in $\PG(1,q^t)$,
{\it Finite Fields Appl.} {\bf 41} (2016), 34--54.


\bibitem{cossidente_non-linear_2016}{\sc	A.~Cossidente, G.~Marino, and F.~Pavese:} Non-linear maximum rank distance codes, {\em Designs, Codes and Cryptography}, {\bf 79} (2016), 597--609.
\bibitem{Delsarte}
{\sc P. Delsarte:}
Bilinear forms over a finite field, with applications to coding theory,
{\it J.\ Combin.\ Theory Ser.\ A} {\bf 25} (1978), 226--241.

\bibitem{durante_nonlinear_MRD_2017}{\sc N.~Durante and A.~Siciliano:} Non-linear maximum rank distance codes in the cyclic model for the
	field reduction of finite geometries, {\em The Electronic Journal of Combinatorics}, {\bf 24} (2017).


\bibitem{Gabidulin}
{\sc E. Gabidulin:}
Theory of codes with maximum rank distance,
\emph{Problems of information transmission}, {\bf 21(3)} (1985), 3--16.

\bibitem{GiuZ}
{\sc L. Giuzzi and F. Zullo:}
Identifiers for MRD-codes,
{\it Linear Algebra Appl.} {\bf 575} (2019), 66--86.

\bibitem{GQ2009}
{\sc R. Gow and R. Quinlan:}
Galois extensions and subspaces of alterning bilinear forms with special rank properties,
{\it Linear Algebra Appl.} {\bf 430} (2009), 2212--2224.

\bibitem{Hir}
{\sc J.W.P. Hirschfeld:}
Projective Geometries over Finite Fields 2nd ed.,
\emph{Clarendon Press}, Oxford (1998).

	
\bibitem{kshevetskiy_new_2005}{\sc A.~Kshevetskiy and E.~Gabidulin:} The new construction of rank codes. In {\em International {Symposium} on {Information} {Theory}, 2005.
		{ISIT} 2005. {Proceedings}}, 2105--2108..
	


\bibitem{Lavrauw} {\sc M. Lavrauw:} Scattered spaces in Galois Geometry, {\it Contemporary Developments in Finite Fields and Applications}, 2016, 195--216.

\bibitem{lavrauw_semifields_2011} {\sc M.~Lavrauw and O.~Polverino:} Finite semifields. In L.~Storme and J.~De~Beule, editors, {\em Current research topics
		in {G}alois Geometry}, chapter~6, pages 131--160. {NOVA} Academic Publishers, 2011.

\bibitem{liebhold_automorphism_2016}{\sc D.~Liebhold and G.~Nebe:} Automorphism groups of {Gabidulin}-like codes, {\it Archiv der Mathematik}, {\bf 107} (2016), 355--366.

\bibitem{Lu2017} {\sc G. Lunardon:}
MRD-codes and linear sets,
{\it J. Combin. Theory Ser. A} \textbf{149} (2017), 1--20.

\bibitem{LP2001}
{\sc G. Lunardon and O. Polverino:}
Blocking Sets and Derivable Partial Spreads,
\emph{J. Algebraic Combin.} {\bf 14} (2001), 49--56.

\bibitem{LTZ}
{\sc G. Lunardon, R. Trombetti and Y. Zhou:}
Generalized Twisted Gabidulin Codes,
\emph{J. Combin. Theory Ser. A} {\bf 159} (2018), 79--106.

\bibitem{LTZ2}
{\sc G. Lunardon, R. Trombetti and Y. Zhou:}
On kernels and nuclei of rank metric codes,
{\it J. Algebraic Combin.} {\bf 46} (2017), 313--340.

\bibitem{McGuireSheekey}
{\sc G. McGuire and J. Sheekey:}
A Characterization of the Number of Roots of Linearized and Projective Polynomials in the Field of Coefficients,
\emph{Finite Fields Appl.} {\bf 57} (2019), 68--91.

	
\bibitem{otal_additive_2016}{\sc K.~Otal and F.~\"Ozbudak: } Additive rank metric codes, {\em IEEE Transactions on Information Theory} {\bf 63} (2017), 164--168.
	
\bibitem{otal_additive_2018}{\sc K.~Otal and F.~\"Ozbudak: } Some new non-additive maximum rank distance codes,
	{\em Finite Fields and Their Applications} {\bf 50} (2018), 293--303.
	
\bibitem{OP2010} {\sc O. Polverino:} Linear sets in finite projective spaces, \emph{Discrete Math.} {\bf 310} (2010), 3096--3107.

\bibitem{Sh}
{\sc J. Sheekey:}
A new family of linear maximum rank distance codes,
{\it Adv. Math. Commun.} {\bf 10}(3) (2016), 475--488.


\bibitem{sheekey_new_arxiv}{\sc J.~Sheekey:} New Semifields and new MRD Codes from Skew Polynomial Rings,
{\it J. Lond. Math. Soc. ({\bf 2})} (2019), 1--25..
	

\bibitem{ShVdV}
{\sc J. Sheekey and G. Van de Voorde:}
Rank-metric codes, linear sets and their duality,
\href{https://arxiv.org/abs/1806.05929}{https://arxiv.org/abs/1806.05929}.

\bibitem{trombetti_new_2017}{\sc R.~Trombetti and Y.~Zhou:} A new family of MRD codes in $\mathbb {F}_q^{2n\times 2n}$ with
	right and middle nuclei $\mathbb {F}_{q^n}$,  {\em IEEE Transaction on Information Theory},
	{\bf 65} (2019), 1054--1062.


\bibitem{PhDthesis}
{\sc F. Zullo:}
Linear codes and Galois geometries: between two worlds,
\emph{Ph.D thesis}, Universit\`a degli Studi della Campania ``\emph{Luigi Vanvitelli}''.
\end{thebibliography}
\end{document}